\theoremstyle{plain}
\newtheorem{thm}{Theorem}[section]
\newtheorem*{thm*}{Theorem}
\newtheorem*{mainthm*}{Main Theorem}
\newtheorem*{mainlem*}{Main Lemma}
\newtheorem{lem}[thm]{Lemma} \newtheorem*{lem*}{Lemma}
\newtheorem{claim}[thm]{Claim} \newtheorem*{claim*}{Claim}
 \newtheorem*{cor*}{Corollary}
\newtheorem{prop}[thm]{Proposition} \newtheorem*{prop*}{Proposition}
\theoremstyle{definition}
 \newtheorem*{defn*}{Definition}
\theoremstyle{remark}
\newtheorem{rem}[thm]{Remark} \newtheorem*{rem*}{Remark}
 \newtheorem*{example*}{Example}
 \newtheorem*{conj*}{Conjecture}
 \newtheorem*{question*}{Question}
\newcommand{\Ord}{\mathrm{Ord}}
\DeclareMathOperator{\powerset}{\mathcal{P}}
\DeclareMathOperator{\rank}{rank}
\begin{document}

\author{Trevor M.\ Wilson}
\title{Weak Vop\v{e}nka's Principle does not imply Vop\v{e}nka's Principle}

\address{Department of Mathematics\\Miami University\\Oxford, Ohio 45056\\ USA}
\email{twilson@miamioh.edu} 

\keywords{Vop\v{e}nka's Principle, large cardinals}

\begin{abstract}
 Vop\v{e}nka's Principle says that the category of graphs has no large discrete full subcategory, or equivalently that the category of ordinals cannot be fully embedded into it. Weak Vop\v{e}nka's Principle is the dual statement, which says that the opposite category of ordinals cannot be fully embedded into the category of graphs. It was introduced in 1988 by Ad\'{a}mek, Rosick\'{y}, and Trnkov\'{a}, who showed that it follows from Vop\v{e}nka's Principle and asked whether the two statements are equivalent. We show that they are not.  However, we show that Weak Vop\v{e}nka's Principle is equivalent to the generalization of itself known as Semi-Weak Vop\v{e}nka's Principle.
\end{abstract}

\maketitle

\section{Introduction}

A \emph{graph} is a structure $\langle G, E\rangle$ where $G$ is a set and $E$ is a binary relation on $G$. (Sometimes we may denote the structure itself by $G$.) A \emph{homomorphism} from $\langle G, E\rangle$ to $\langle G', E'\rangle$ is a function $h : G \to G'$ such that $\langle v,w\rangle \in E$ implies $\langle h(v),h(w)\rangle \in E'$.

Despite its simple definition, the category of graphs is sufficiently complex that some aspects of its behavior are independent of the usual axioms of set theory, leading us to consider additional principles beyond those axioms. One such principle is \emph{Vop\v{e}nka's Principle} (VP), which states that the category of graphs has no large discrete full subcategory. In other words, for every proper class of graphs there is a non-identity homomorphism among the graphs in that class.

The distinction between sets and proper classes is crucial to VP because a result of Vop\v{e}nka, Pultr, and Hedrl\'{\i}n \cite{VopPulHedRigidRelation} implies that for every cardinal $\kappa$ there is a set of $\kappa$ many graphs without any non-identity homomorphisms among its members. 

Vop\v{e}nka's Principle is not actually specific to graphs because many other categories can be fully embedded into the category of graphs. For example, every locally presentable category can be so embedded (see Ad\'{a}mek and Rosick\'{y} \cite[Theorem 2.65]{AdaRosLocallyPresentable}.) Vop\v{e}nka's Principle and the weak forms of it discussed below therefore admit equivalent statements in terms of locally presentable categories.

Ad\'{a}mek, Rosick\'{y}, and Trnkov\'{a} \cite[Lemma 1]{AdaRosTrnLimitClosed} observed that Vop\v{e}nka's principle is equivalent to the statement that $\text{Ord}$ (the well-ordered category of all ordinals) cannot be fully embedded into the category of graphs, meaning that no sequence of graphs $\langle G_\alpha : \alpha \in \Ord\rangle$ has both of the following properties: whenever $\alpha \le \alpha'$ there is a unique homomorphism $G_\alpha \to G_{\alpha'}$ and whenever $\alpha < \alpha'$ there is no homomorphism $G_{\alpha'} \to G_{\alpha}$.

They defined \emph{Weak Vop\v{e}nka's Principle} (WVP) as the dual statement that the opposite category $\text{Ord}^\text{op}$ cannot be fully embedded into the category of graphs, meaning that no sequence of graphs $\langle G_\alpha : \alpha \in \Ord\rangle$ has both of the following properties: whenever $\alpha \le \alpha'$ there is a unique homomorphism $G_{\alpha'} \to G_{\alpha}$ and whenever $\alpha < \alpha'$ there is no homomorphism $G_\alpha \to G_{\alpha'}$.  They showed that Vop\v{e}nka's Principle implies WVP \cite[Lemma 2]{AdaRosTrnLimitClosed} and asked whether the two principles are equivalent.

We will show that WVP is not equivalent to Vop\v{e}nka's Principle. As an immediate consequence of this result, the statement ``in every locally presentable category, every full subcategory closed under limits is reflective\footnote{A subcategory $\mathscr{A}$ of a category $\mathscr{K}$ is called \emph{reflective} if for every object $K \in \mathscr{K}$ there is a morphism $K \to K^* \in \mathscr{A}$ through which every morphism $K \to A \in \mathscr{A}$ factors uniquely.}'' is not equivalent to the dual statement ``in every locally presentable category, every full subcategory closed under colimits is coreflective'' because the two statements are equivalent to WVP and VP respectively \cite{AdaRosTrnLimitClosed, RosTrnAdaUnexpected}.

Uniqueness of homomorphisms can be removed from the statement of WVP to obtain \emph{Semi-Weak Vop\v{e}nka's Principle} (SWVP), introduced by Ad\'{a}mek and Rosick\'{y} \cite{AdaRosInjectivity}, which says that no sequence of graphs $\langle G_\alpha : \alpha \in \Ord\rangle$ has both of the following properties: whenever $\alpha \le \alpha'$ there is a homomorphism $G_{\alpha'} \to G_{\alpha}$ (not necessarily unique) and whenever $\alpha < \alpha'$ there is no homomorphism $G_\alpha \to G_{\alpha'}$.

We will show that WVP and SWVP are equivalent. As an immediate consequence of this result, the statement ``in every locally presentable category, every full subcategory closed under limits is reflective'' is equivalent to the statement ``in every locally presentable category, every full subcategory closed under products and retracts is weakly reflective\footnote{A subcategory $\mathscr{A}$ of a category $\mathscr{K}$ is called \emph{weakly reflective} if it is closed under retracts and for every object $K \in \mathscr{K}$ there is a morphism $K \to K^* \in \mathscr{A}$ through which every morphism $K \to A \in \mathscr{A}$ factors, not necessarily uniquely.}'' because the latter statement is equivalent to SWVP \cite[Corollary I.15]{AdaRosInjectivity}.

Vop\v{e}nka's Principle implies Semi-Weak Vop\v{e}nka's Principle \cite{AdaRosInjectivity} and it is clear from the definitions that Semi-Weak Vop\v{e}nka's Principle implies Weak Vop\v{e}nka's Principle. We will show that only the second implication can be reversed:
\[ \text{VP} \underset{\not \Leftarrow}\implies \text{SWVP} \iff \text{WVP}.\]

We will also obtain a result concerning strongly compact cardinals, whose definition we now review. For a cardinal $\kappa$ and a set $X$, we let $\powerset_{\kappa}(X)= \{ A \subset X : |A| < \kappa\}$, meaning the set of all subsets of $X$ of cardinality less than $\kappa$. For an uncountable cardinal $\kappa$ and a set $X$ we say that $\kappa$ is \emph{$X$-compact} if there is a fine $\kappa$-complete ultrafilter on $\powerset_{\kappa}(X)$, where \emph{fine} means that for every $x \in X$ the set $\{A \in \powerset_{\kappa}(X) : x \in A\}$ is in the ultrafilter and \emph{$\kappa$-complete} means that the intersection of fewer than $\kappa$ many sets in the ultrafilter is in the ultrafilter. An uncountable cardinal $\kappa$ is called \emph{strongly compact}
if it is $X$-compact for every set $X$.

We will show that SWVP (and therefore also WVP) does not imply the existence of a strongly compact cardinal, answering a question raised by Ad\'{a}mek and Rosick\'{y} \cite{AdaRosInjectivity}.

A technical augmentation of strong compactness known as supercompactness will be useful here. For an uncountable cardinal $\kappa$ and a set $X$ we say that $\kappa$ is \emph{$X$-supercompact} if there is a \emph{normal} fine $\kappa$-complete ultrafilter on $\powerset_{\kappa}(X)$, where normality means that every choice function\footnote{A \emph{choice function} is a function $f$ such that $f(A) \in A$ for every set $A$ in the domain of $f$.} defined on a set in the ultrafilter is constant on a set in the ultrafilter. An uncountable cardinal $\kappa$ is called \emph{supercompact} if it is $X$-supercompact for every set $X$.

We will separate SWVP from Vop\v{e}nka's Principle by showing that the two principles relate to supercompactness differently. Vop\v{e}nka's Principle implies that there is a supercompact cardinal by Solovay, Reinhardt, and Kanamori \cite[Theorem 6.9]{SolReiKanStrongAxioms}, and in fact Bagaria \cite[Corollary 4.6]{BagariaCNCardinals} showed that the existence of a supercompact cardinal is equivalent to a fragment of Vop\v{e}nka's Principle for certain definable classes. By contrast, we will show that the existence of a supercompact cardinal implies the existence of a set-sized \emph{model} of SWVP.

The natural setting for Vop\v{e}nka's Principle and its weak forms is second-order set theory, which allows reference to arbitrary classes and arbitrary Ord-sequences of graphs without any assumption of definability,
and the natural models of second-order set theory are the structures $\langle V_{\kappa+1}, \in \rangle$ where $\kappa$ is an inaccessible cardinal. Here $V_\alpha$ denotes the $\alpha^\text{th}$ level of the cumulative hierarchy defined by $V_0 = \emptyset$, $V_{\alpha+1} = \powerset(V_\alpha)$, and $V_\lambda = \bigcup_{\alpha < \lambda} V_\alpha$ for every limit ordinal $\lambda$. In other words, $V_\alpha$ is the set of all sets of rank less than $\alpha$. The structure $\langle V_{\kappa+1}, \in \rangle$ sees elements of $V_\kappa$ and $V_{\kappa+1}$ as its sets and classes respectively.\footnote{We don't need to explicitly add the type distinction between sets and classes to the structure, because the set-hood property ``$x \in V_\kappa$'' can be defined in the structure by the formula ``$x \in y$ for some $y$.''}

The following proposition gives a sufficient\footnote{But not necessary: see Remark \ref{rem:Woodin}.} condition to obtain a model of SWVP:

\begin{prop}[ZFC] \label{prop:V-kappa-satisfies-SWVP}
 If $\kappa$ is a supercompact cardinal, then the structure $\langle V_{\kappa+1}, \in\rangle$ satisfies Semi-Weak Vop\v{e}nka's Principle.
\end{prop}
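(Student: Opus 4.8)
The plan is to argue by contradiction. Suppose $\kappa$ is supercompact but $\langle V_{\kappa+1},\in\rangle\models\neg\mathrm{SWVP}$, witnessed by a sequence $\vec G=\langle G_\alpha:\alpha<\kappa\rangle$ of graphs in $V_\kappa$ such that there is a homomorphism $G_{\alpha'}\to G_\alpha$ whenever $\alpha\le\alpha'<\kappa$ and no homomorphism $G_\alpha\to G_{\alpha'}$ whenever $\alpha<\alpha'<\kappa$; the crucial constraint is that every $G_\alpha$ has rank below $\kappa$. A first reduction is that we may assume the cardinalities $|G_\alpha|$ are unbounded in $\kappa$: if they were bounded by some $\mu<\kappa$ then, since $\kappa$ is inaccessible, there would be fewer than $\kappa$ isomorphism types of graphs of size at most $\mu$, so two of the $\kappa$ graphs $G_\alpha$ would be isomorphic, and the isomorphism is a homomorphism $G_\alpha\to G_{\alpha'}$ with $\alpha<\alpha'$, a contradiction. (Passing to a $\kappa$-sized subsequence preserves the two defining properties, so this reduction is harmless.)

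Next I would fix a supercompactness embedding $j:V\to M$ with critical point $\kappa$, chosen with ${}^{\lambda}M\subseteq M$ for some $\lambda$ much larger than $2^\kappa$ and with $j(\kappa)>\lambda$. Then $V_{\kappa+1}\subseteq M$, so $\vec G\in M$, and homomorphisms between graphs of rank below $\kappa$ are absolute between $M$ and $V$ (such a homomorphism is a set of fewer than $\kappa$ pairs drawn from $M$, hence lies in $M$). Writing $j(\vec G)=\langle G'_\beta:\beta<j(\kappa)\rangle$ and using $j\restriction V_\kappa=\mathrm{id}$, we get $G'_\alpha=G_\alpha$ for $\alpha<\kappa$, and by elementarity $M$ believes $j(\vec G)$ has the two defining properties out to length $j(\kappa)$. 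Putting $H:=G'_\kappa$, this produces a graph $H\in M\subseteq V$ such that (taking $\beta=\alpha\le\kappa<j(\kappa)$) there is a homomorphism $H\to G_\alpha$ for every $\alpha<\kappa$, while $M$ — hence $V$, by the absoluteness applied to the small source $G_\alpha$ — has no homomorphism $G_\alpha\to H$. Concretely, $H$ is an ultraproduct of the graphs $G_\alpha$ with respect to a normal fine ultrafilter presenting $j$, so it behaves like the ``limit'' of the sequence.

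The heart of the matter, and the step I expect to be the main obstacle, is to convert the existence of this limit graph into an outright contradiction. This genuinely requires the \emph{supercompactness} of $\kappa$, not merely a single elementary embedding: the bare existence in $V$ of a graph mapping to every $G_\alpha$ and receiving no map from any of them is not absurd in itself (an $\Ord$-length counterexample in $V$ would supply one), and indeed the counterexample reflects harmlessly \emph{downward} under embeddings, so one cannot finish by reflecting it below $\kappa$. The approach I would take exploits that $\lambda$, hence the closure of $M$, can be taken arbitrarily large, so that $M$ continues to believe $\kappa$ is $\mu$-supercompact for large $\mu$ while simultaneously believing ``$\vec G$ is a counterexample of length $\kappa$ all of whose graphs have rank below $\kappa$, and $\vec G$ is properly extended by $j(\vec G)$.'' In the (already excluded) bounded-cardinality case, $\kappa$-completeness of the ultrafilter forces $H$ to be isomorphic to a single $G_\beta$, immediately yielding the forbidden homomorphism $G_\beta\to G_\alpha$ with $\beta<\alpha$; in the unbounded case one instead runs a reflection argument inside $M$, using that $M$ sees $\kappa$ as supercompact and sees $\vec G$ extending, to descend to a strictly smaller cardinal carrying such an extendible counterexample and then iterate until a contradiction is reached. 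Identifying the precise reflecting statement — in particular keeping its parameters inside $V_\kappa$, which the ambient class parameter $\vec G$ does not respect — is the delicate point, and is where the real work of the proof is concentrated; the remainder is routine manipulation of elementary embeddings and absoluteness.
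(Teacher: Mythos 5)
There is a genuine gap, and you have located it yourself: your last paragraph sets up the limit graph $H = G'_\kappa$ but then only gestures at a ``reflection and iterate'' endgame without giving an argument, and the difficulty you flag (keeping the parameters of the reflected statement inside $V_\kappa$ when the witness $\vec G$ is a $\kappa$-sized class from the point of view of $V_\kappa$) is real and is not resolved. The mere existence of $H$ with maps onto every $G_\alpha$ and no maps from any $G_\alpha$ is, as you say, not absurd, so the proof as written does not terminate.

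The missing idea is to work with the \emph{product} graph $P = \prod_{\alpha<\kappa} G_\alpha$ rather than with the single graph $H$. Choose $j:V\to M$ with critical point $\kappa$ and ${}^{2^\kappa}M\subseteq M$, so that $j\restriction P$ is an element of $M$. The map $j\restriction P$ preserves adjacency (adjacency in the product is componentwise and $j$ is elementary), so in $M$ it is a graph homomorphism from $\prod_{\alpha<\kappa}G'_\alpha$ (the $M$-product over the index set $\kappa = j''\kappa$, which coincides with $P$) into $j(P)=\prod_{\beta<j(\kappa)}G'_\beta$. Now compose three homomorphisms inside $M$: the backward homomorphisms $G'_\kappa\to G'_\alpha=G_\alpha$ for $\alpha<\kappa$ (which exist in $M$ by elementarity) combine to give $G'_\kappa\to P$; then $j\restriction P$ gives $P\to j(P)$; then the projection gives $j(P)\to G'_{\kappa+1}$. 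The result is a homomorphism $G'_\kappa\to G'_{\kappa+1}$ in $M$, contradicting the elementarity of $j$ applied to the statement ``there is no homomorphism $G_\alpha\to G_{\alpha+1}$ for any $\alpha<\kappa$.'' This is exactly the content of the paper's Lemma~\ref{lem:product-hom}, phrased there in ultrafilter language: $U$-almost every $A\in\powerset_\kappa(I\cup\prod_{i\in I}G_i)$ gives a restriction map $\rho_A$ that is a nonadjacency-preserving bijection onto a small subproduct, whose inverse is a homomorphism $\prod_{i\in I_0}G_i\to\prod_{i\in I}G_i$; the proposition then follows by the same three-fold composition $G_\beta\to\prod_{\alpha\in I_0}G_\alpha\to\prod_{\alpha<\kappa}G_\alpha\to G_{\beta+1}$ with $\beta=\sup I_0$. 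Your preliminary reduction to unbounded $|G_\alpha|$ is correct but unnecessary once products enter the picture.
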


This result will allow us to easily separate SWVP from the existence of supercompact and strongly compact cardinals under appropriate minimality assumptions on $\kappa$.

\begin{thm}[ZFC] \label{thm:V-kappa-satisfies-SWVP-and-not-exists-spct}
 If $\kappa$ is the first supercompact cardinal, then $\langle V_{\kappa+1}, \in\rangle$ satisfies Semi-Weak Vop\v{e}nka's Principle 
 and the statement ``there is no supercompact cardinal.''
\end{thm}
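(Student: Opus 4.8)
The plan is to derive both conclusions from the hypothesis that $\kappa$ is the \emph{first} supercompact cardinal. The first conclusion is immediate: since $\kappa$ is in particular a supercompact cardinal, Proposition~\ref{prop:V-kappa-satisfies-SWVP} gives directly that $\langle V_{\kappa+1}, \in \rangle$ satisfies Semi-Weak Vop\v{e}nka's Principle. So the entire content is the second conclusion, that $\langle V_{\kappa+1}, \in \rangle \models$ ``there is no supercompact cardinal.''

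First I would unwind what this second assertion means. Because $\kappa$ is supercompact it is inaccessible, so $\langle V_\kappa, \in\rangle \models \mathrm{ZFC}$ and it correctly computes $\powerset_\lambda(X)$ and $\powerset(\powerset_\lambda(X))$ for $X \in V_\kappa$; moreover any normal fine $\lambda$-complete ultrafilter on $\powerset_\lambda(X)$ with $X \in V_\kappa$ has rank below $\kappa$ and hence lies in $V_\kappa$. Since supercompactness of a cardinal is a statement purely about sets, it follows that $\langle V_{\kappa+1}, \in\rangle \models$ ``$\lambda$ is supercompact'' if and only if $\lambda < \kappa$ and $\lambda$ is (genuinely, in $V$) $\gamma$-supercompact for every $\gamma < \kappa$. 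So the task reduces to showing that no $\lambda < \kappa$ is $\gamma$-supercompact for all $\gamma < \kappa$.

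I would prove this by reflection, using the full supercompactness of $\kappa$ to upgrade ``$\gamma$-supercompactness of $\lambda$ for all $\gamma < \kappa$'' to genuine supercompactness of $\lambda$. Suppose toward a contradiction that $\lambda < \kappa$ is $\gamma$-supercompact for every $\gamma < \kappa$. Fix an arbitrary cardinal $\gamma$ and let $j \colon V \to M$ witness that $\kappa$ is $\theta$-supercompact for $\theta = 2^{2^\gamma}$, so that $\crit(j) = \kappa$, $j(\kappa) > \theta \ge \gamma$, and $M$ is closed under $\theta$-sequences. Since $\lambda < \kappa$ we have $j(\lambda) = \lambda$, so applying $j$ to the true statement ``for every $\delta < \kappa$ there is a normal fine $\lambda$-complete ultrafilter on $\powerset_\lambda(\delta)$'' and instantiating at $\delta = \gamma < j(\kappa)$ yields an ultrafilter $U \in M$ that $M$ believes is a normal fine $\lambda$-complete ultrafilter on $\powerset_\lambda(\gamma)$. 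The closure of $M$ ensures that $\powerset_\lambda(\gamma)$, its power set, and every potential witness to the failure of ultrafilter-ness, $\lambda$-completeness, or normality of $U$ (each such witness being coded by a sequence of length at most $2^{2^\gamma} = \theta$ of elements of $M$) actually belongs to $M$; hence $U$ really is such an ultrafilter in $V$, and $\lambda$ is $\gamma$-supercompact in $V$. As $\gamma$ was arbitrary, $\lambda$ is supercompact in $V$, contradicting the minimality of $\kappa$.

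The main obstacle is the absoluteness bookkeeping in this last step: one must choose the degree of supercompactness of $\kappa$ large enough relative to $\gamma$ that $M$ is closed under sequences long enough to contain $\powerset(\powerset_\lambda(\gamma))$ together with all relevant choice functions and $({<}\lambda)$-indexed families of its subsets, so that the properties ``ultrafilter,'' ``$\lambda$-complete,'' and ``normal'' transfer downward from $M$ to $V$. Everything else is routine: the reduction of the meaning of ``$\langle V_{\kappa+1}, \in\rangle \models$ there is no supercompact cardinal'' uses only the inaccessibility of $\kappa$, and the first half of the theorem is a direct appeal to Proposition~\ref{prop:V-kappa-satisfies-SWVP}.
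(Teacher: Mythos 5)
Your proposal is correct and follows essentially the same route as the paper: the first conclusion is an immediate appeal to Proposition~\ref{prop:V-kappa-satisfies-SWVP}, and the second reduces to showing that a $\lambda < \kappa$ which is $\gamma$-supercompact for all $\gamma < \kappa$ would be fully supercompact, contradicting minimality. The only difference is that the paper cites this last reflection step as a known result (Kanamori, Exercise 22.9), whereas you prove it directly with the standard elementary-embedding argument; the details you supply are sound.
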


It follows from Theorem \ref{thm:V-kappa-satisfies-SWVP-and-not-exists-spct} that SWVP does not provably imply VP (unless it is inconsistent) because the latter principle does imply the existence of a supercompact cardinal.

\begin{thm}[ZFC] \label{thm:V-kappa-satisfies-SWVP-and-not-exists-SC}
 If $\kappa$ is the first supercompact cardinal and the first strongly compact cardinal, then $\langle V_{\kappa+1}, \in\rangle$ satisfies Semi-Weak Vop\v{e}nka's Principle and the statement ``there is no strongly compact cardinal.''
\end{thm}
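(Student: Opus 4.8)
The plan is to obtain the first assertion at no cost and then to focus all the work on the second. Since $\kappa$ is supercompact, Proposition~\ref{prop:V-kappa-satisfies-SWVP} gives immediately that $\langle V_{\kappa+1},\in\rangle$ satisfies Semi-Weak Vop\v{e}nka's Principle. (The hypothesis is consistent relative to a supercompact cardinal by Magidor's work on the identity crises of strongly compact cardinals; note also that ``$\kappa$ is the least supercompact and the least strongly compact cardinal'' is equivalent to ``$\kappa$ is supercompact and the least strongly compact cardinal,'' since a supercompact cardinal is strongly compact.) So it remains to prove that $\langle V_{\kappa+1},\in\rangle$ satisfies ``there is no strongly compact cardinal.''

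The first step is to unwind what that statement says about the structure. Because $\kappa$ is inaccessible, for every $\lambda < \kappa$ and every $X \in V_\kappa$ the sets $\powerset_\lambda(X)$ and $\powerset(\powerset_\lambda(X))$ lie in $V_\kappa$, so any ultrafilter on $\powerset_\lambda(X)$ already belongs to $V_\kappa$, and the property ``$U$ is a fine $\lambda$-complete ultrafilter on $\powerset_\lambda(X)$'' is absolute among $V$, $V_\kappa$, and $\langle V_{\kappa+1},\in\rangle$. Transporting fine ultrafilters along bijections between a set and its cardinality, one then gets that $\langle V_{\kappa+1},\in\rangle$ satisfies ``$\lambda$ is strongly compact'' if and only if, for every ordinal $\gamma < \kappa$, the cardinal $\lambda$ is \emph{$\gamma$-strongly compact} in $V$, meaning that it carries a fine $\lambda$-complete ultrafilter on $\powerset_\lambda(\gamma)$; and any such $\lambda$ is a ``set'' of the structure, hence $\lambda < \kappa$.

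The heart of the argument is to show that such a $\lambda$ would already be strongly compact in $V$, contradicting that $\kappa$ is the least strongly compact cardinal. Suppose $\lambda < \kappa$ is $\gamma$-strongly compact in $V$ for every $\gamma < \kappa$, and fix an arbitrary ordinal $\gamma$; it suffices to produce a fine $\lambda$-complete ultrafilter on $\powerset_\lambda(\gamma)$ in $V$. Using the supercompactness of $\kappa$, fix a cardinal $\theta \ge |\powerset(\powerset_\lambda(\gamma))|$ with $\theta \ge \kappa$ and an elementary embedding $j \colon V \to M$ with $\crit(j) = \kappa$, $j(\kappa) > \theta$, and $M^\theta \subseteq M$. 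Then $j(\lambda) = \lambda$, and by elementarity $M$ satisfies ``$\lambda$ is $\delta$-strongly compact for every $\delta < j(\kappa)$,'' so, since $\gamma < j(\kappa)$, there is in $M$ a fine $\lambda$-complete ultrafilter $U$ on $(\powerset_\lambda(\gamma))^M$. The $\theta$-closure of $M$ makes $\powerset_\lambda(\gamma)$ and $\powerset(\powerset_\lambda(\gamma))$ the same in $M$ as in $V$, and makes every subfamily of $U$ of size less than $\lambda$ a member of $M$; from this one checks that $U$ is a genuine fine $\lambda$-complete ultrafilter on $\powerset_\lambda(\gamma)$ in $V$. Hence $\lambda$ is $\gamma$-strongly compact in $V$ for every ordinal $\gamma$, so $\lambda$ is strongly compact in $V$, a contradiction. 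This establishes the theorem; in particular, since Vop\v{e}nka's Principle implies the existence of a strongly compact cardinal, Semi-Weak Vop\v{e}nka's Principle does not, answering the question of Ad\'{a}mek and Rosick\'{y}.

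I expect the main obstacle to be the verification in the last paragraph that the reflected object $U$ is still a fine $\lambda$-complete ultrafilter from the point of view of $V$: this requires fixing precisely the right degree of closure $\theta$ for the supercompactness embedding, large enough that every subset of $\powerset_\lambda(\gamma)$ and every subfamily of $U$ of size less than $\lambda$ is absorbed into $M$. It is also worth noting, to keep the argument honest, that it suffices to treat cofinally many $\gamma$, since a fine $\lambda$-complete ultrafilter on $\powerset_\lambda(\gamma)$ projects along $x \mapsto x \cap \gamma'$ to one on $\powerset_\lambda(\gamma')$ for any $\gamma' \le \gamma$. The remaining pieces --- the absoluteness facts about $\langle V_{\kappa+1},\in\rangle$ and the transport of ultrafilters along bijections --- are routine.
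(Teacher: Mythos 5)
Your proposal is correct and follows the same route as the paper: the SWVP part is immediate from Proposition~\ref{prop:V-kappa-satisfies-SWVP}, and the second part comes down to showing that a cardinal $\lambda < \kappa$ that is $X$-compact for all $X \in V_\kappa$ must be fully strongly compact, contradicting the minimality of $\kappa$. The only difference is that the paper cites this reflection fact (as the strongly compact analogue of Kanamori's Exercise~22.9) while you prove it directly with a $\theta$-supercompactness embedding $j \colon V \to M$ and the closure $M^\theta \subseteq M$; your verification that the ultrafilter $U \in M$ on $\powerset_\lambda(\gamma)$ remains fine and $\lambda$-complete in $V$ is the standard argument and is sound.
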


Magidor \cite{MagIdentityCrisis} showed that the hypothesis of Theorem \ref{thm:V-kappa-satisfies-SWVP-and-not-exists-SC} is consistent, assuming that the existence of a supercompact cardinal is consistent. It follows that Semi-Weak Vop\v{e}nka's Principle does not provably imply the existence of a strongly compact cardinal (unless the existence of a supercompact cardinal is inconsistent.)

Finally, we will show that the weak and semi-weak forms of VP are equivalent:

\begin{thm}[GB + AC]\label{thm:wvp-implies-swvp}
 Weak Vop\v{e}nka's Principle is equivalent to Semi-Weak Vop\v{e}nka's Principle.
\end{thm}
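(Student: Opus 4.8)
The implication from Semi-Weak to Weak Vop\v{e}nka's Principle is immediate from the definitions, since any sequence of graphs witnessing a failure of WVP (unique homomorphisms downward, none upward) in particular witnesses a failure of SWVP; so the real content is the converse, and I would prove it by contraposition. Assume SWVP fails, and fix a sequence of graphs $\langle G_\alpha : \alpha \in \Ord\rangle$ such that there is a homomorphism $G_{\alpha'} \to G_\alpha$ whenever $\alpha \le \alpha'$ and none whenever $\alpha < \alpha'$. The goal is to build from it a sequence $\langle H_\alpha : \alpha \in \Ord\rangle$ retaining the ``no homomorphism upward'' behavior but with \emph{unique} homomorphisms downward, which contradicts WVP. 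A preliminary remark worth recording is that for any graph $X$ the class $\{\alpha : \text{there is a homomorphism } X \to G_\alpha\}$ is downward closed --- compose a homomorphism $X \to G_\alpha$ with a transition homomorphism $G_\alpha \to G_\gamma$ for $\gamma \le \alpha$ --- hence an initial segment of $\Ord$; thus the sequence comes equipped with a canonical ordinal-valued degree that every homomorphism respects (and on the terms themselves the degree is just the index), so the \emph{only} obstruction to WVP is the possible presence of more than one homomorphism of a given degree.

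The plan is to \emph{rigidify}. Using the Axiom of Choice I would first fix, for each pair $\alpha \le \alpha'$, a homomorphism $g_{\alpha',\alpha} : G_{\alpha'} \to G_\alpha$; these are generally not compatible under composition --- that incompatibility is exactly what defeats a naive ``inverse limit'' construction --- so instead, invoking the standard fact that the category of graphs is flexible enough to carry rigid relational structures, I would attach to each $G_\alpha$ a rigid ``scaffold'' encoding the ordinal $\alpha$ (together with enough finite bookkeeping about the chosen maps $g_{\alpha',\alpha}$), glued to $G_\alpha$ so that the scaffold vertices are structurally recognizable. Writing $H_\alpha$ for the scaffolded graph, I would want: (a) every homomorphism $H_{\alpha'} \to H_\alpha$ with $\alpha \le \alpha'$ carries the scaffold of $H_{\alpha'}$ by the unique order-respecting ``truncation'' onto the scaffold of $H_\alpha$, and this rigidly forces the homomorphism to agree with $g_{\alpha',\alpha}$ on the ``$G$-part,'' so there is exactly one such homomorphism; (b) $g_{\alpha',\alpha}$ together with that truncation does assemble into a genuine homomorphism $H_{\alpha'} \to H_\alpha$; and (c) a homomorphism $H_\alpha \to H_{\alpha'}$ with $\alpha < \alpha'$ is impossible, because the structural recognizability of the scaffolds together with their rigid coding forces it to induce a homomorphism $G_\alpha \to G_{\alpha'}$ (or an equally impossible map of scaffolds). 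Granting (a)--(c), the sequence $\langle H_\alpha : \alpha \in \Ord\rangle$ is a full embedding of $\Ord^{\mathrm{op}}$ into the category of graphs, contradicting WVP, so no SWVP-counterexample exists.

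I expect clause (a) to be the main obstacle and effectively the whole proof. The scaffold must do several competing things at once: it has to be rigid enough to kill every downward homomorphism but one; yet it must not be so rigid as to be ``directional'' in isolation --- indeed a sequence of scaffolds alone with unique homomorphisms downward and none upward would already contradict WVP, so all the directionality must come from the $G_\alpha$'s and the scaffold must stay transparent to it; and it must never accidentally open up a homomorphism in the forbidden upward direction. Finding a graph-coding of ordinals and a gluing scheme that balances these requirements, and then verifying by a careful case analysis on the possible images of vertices that every homomorphism $H_{\alpha'} \to H_\alpha$ is completely determined, is where the work lies; the remaining verifications and the check that the whole construction is definable over the given data should be comparatively routine.
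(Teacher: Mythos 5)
Your reduction of the theorem to ``build a rigidified sequence from an SWVP-counterexample'' is the right frame, and your preliminary remark correctly isolates non-uniqueness of the downward homomorphisms as the sole obstruction. But there is a genuine gap, and in fact the specific plan in clause (a) cannot work as stated. If every homomorphism $H_{\alpha''}\to H_{\alpha'}$ and $H_{\alpha'}\to H_\alpha$ is forced to agree with $g_{\alpha'',\alpha'}$ and $g_{\alpha',\alpha}$ on the $G$-part, then the composite is a homomorphism $H_{\alpha''}\to H_\alpha$ whose $G$-part is $g_{\alpha',\alpha}\circ g_{\alpha'',\alpha'}$; by the uniqueness you are asserting, this must coincide with $g_{\alpha'',\alpha}$. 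So clauses (a) and (b) jointly force your chosen family to be closed under composition --- precisely the coherence you concede it lacks. You would therefore first need a \emph{functorial} family of backward homomorphisms, and AC alone does not provide one: at a limit stage $\lambda$ a coherent extension requires a cone from $G_\lambda$ over the already-built inverse system $\langle G_\beta:\beta<\lambda\rangle$, and such a cone (equivalently, a suitable point of the inverse limit) need not exist even though each individual $\Hom(G_\lambda,G_\beta)$ is nonempty. Beyond this, the heart of the matter --- designing a scaffold that kills all but one downward homomorphism on the $G$-part while opening no upward ones --- is exactly what you defer, so the proposal records the difficulty rather than resolving it.

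The paper resolves both problems at once by replacing the $G_\alpha$ with the products $H_\beta=\prod_{\alpha<\beta}G_{\alpha+1}$: these still form an SWVP-counterexample (a homomorphism $H_\beta\to H_{\beta'}$ would yield $G_\beta\to H_\beta\to H_{\beta'}\to G_{\beta+1}$), and their backward homomorphisms include the canonical, automatically coherent restrictions $f\mapsto f\restriction\beta$. Uniqueness is then obtained not by local scaffolds attached to each graph but by coding the entire initial segment $\langle H_\beta:\beta\le\lambda\rangle$ into a relational structure on $V_{\lambda+1}$ whose relations record rank, membership, non-membership, and the adjacency of each $H_\beta$. Any homomorphism of these structures cannot send an ordinal to a larger one (else it would induce a forbidden forward homomorphism among the $H_\beta$) and must preserve $\in$ and $\notin$ on $V_\lambda$; since a rank-nonincreasing map preserving $\in$ and $\notin$ is the identity on $V_\lambda$, the homomorphism is forced to be the truncation $y\mapsto y\cap V_\lambda$. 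A Hedrl\'{\i}n--Pultr full embedding then converts these structures into graphs, which is the one ingredient your sketch and the paper genuinely share.
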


For maximum generality we prove Theorem \ref{thm:wvp-implies-swvp} in the second-order set theory GB + AC, which is G\"odel--Bernays set theory with the Axiom of Choice for sets.  Every model of ZFC together with its definable classes forms a model of GB + AC, so the result also holds for definable Ord-sequences of graphs in ZFC as a special case.

Proposition \ref{prop:V-kappa-satisfies-SWVP} and Theorems \ref{thm:V-kappa-satisfies-SWVP-and-not-exists-spct} and \ref{thm:V-kappa-satisfies-SWVP-and-not-exists-SC} will be proved in Section \ref{sec:vp-is-stronger}.  Theorem \ref{thm:wvp-implies-swvp} will be proved in Section \ref{sec:wvp-implies-swvp}. The author is grateful to Joan Bagaria for his helpful comments on an earlier version of this article.

\section{Models of Semi-Weak Vop\v{e}nka's Principle}\label{sec:vp-is-stronger}

In this section we obtain models of Semi-Weak Vop\v{e}nka's Principle from supercompact cardinals (Proposition \ref{prop:V-kappa-satisfies-SWVP}.) Then by adding appropriate minimality assumptions to the hypothesis of supercompactness, we obtain models separating SWVP from the existence of supercompact and strongly compact cardinals (Theorems \ref{thm:V-kappa-satisfies-SWVP-and-not-exists-spct} and \ref{thm:V-kappa-satisfies-SWVP-and-not-exists-SC} respectively.)

Our key application of supercompactness is the following lemma regarding homomorphisms of product graphs. In the set-theoretic construction of the product, an element of $\prod_{i \in I} G_i$ is a function $f$ with domain $I$ such that $f(i) \in G_i$ for all $i \in I$, and $f$ is adjacent to $g$ in $\prod_{i \in I} G_i$ if and only if $f(i)$ is adjacent to $g(i)$ in $G_i$ for all $i \in I$. (When we say that a vertex $v$ is adjacent to a vertex $w$ in a graph $G$, we mean that the ordered pair $\langle v,w\rangle$ is an element of the edge relation of $G$, which we do not name explicitly.)

\begin{lem}[ZFC]\label{lem:product-hom}
 Let $\kappa$ be a supercompact cardinal, let $I$ be a set of cardinality $\kappa$, and for each index $i \in I$ let $G_i$ be a graph of cardinality less than $\kappa$. Then there is a set  $I_0 \subset I$ of cardinality less than $\kappa$ and a homomorphism from $\prod_{i \in I_0} G_i$ to $\prod_{i \in I} G_i$.
\end{lem}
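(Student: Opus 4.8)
The plan is to apply a supercompactness embedding and reflect. First I would make two harmless reductions: relabeling index sets by bijections, assume $I = \kappa$; and replacing each $G_\alpha$ by an isomorphic copy whose underlying set is an ordinal less than $\kappa$ (transporting the desired homomorphism back along the resulting product isomorphisms), assume each $G_\alpha \in V_\kappa$. Write $\vec G = \langle G_\alpha : \alpha < \kappa\rangle$ and $P = \prod_{\alpha < \kappa} G_\alpha$; since $\kappa$ is inaccessible and each $G_\alpha$ has underlying ordinal below $\kappa$, one checks that $P \subseteq V_{\kappa+1}$ and $|\mathrm{TC}(P)|, |\mathrm{TC}(\vec G)| \le 2^\kappa$.

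Fix $\lambda \ge 2^\kappa$ and, using supercompactness, an elementary embedding $j \colon V \to M$ with critical point $\kappa$ arising from a normal fine $\kappa$-complete ultrafilter on $\powerset_\kappa(\lambda)$, so that $j(\kappa) > \lambda$ and ${}^\lambda M \subseteq M$. From the $\lambda$-closure of $M$ I would extract the needed absoluteness: $\vec G, P \in M$; the edge relation of $P$ is computed the same way in $V$ and in $M$; $P \subseteq M$, whence also $j \restriction P \in M$; and $\prod^M_{\alpha < \kappa} G_\alpha = P$. Moreover $\crit(j) = \kappa$ and $G_\alpha \in V_\kappa$ give $j(\vec G)(\alpha) = j(G_\alpha) = G_\alpha$ for all $\alpha < \kappa$, so inside $M$ the product $\prod_{\alpha \in \kappa} j(\vec G)(\alpha)$ is exactly $P$.

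The crux is then an observation made inside $M$: the set $\kappa$ is a subset of $j(I) = j(\kappa)$ of $M$-cardinality $\kappa < j(\kappa)$, and $j \restriction P$ is a homomorphism from $P = \prod^M_{\alpha \in \kappa} j(\vec G)(\alpha)$ to $j(P) = \prod^M_{\alpha \in j(I)} j(\vec G)(\alpha)$. The map $j \restriction P$ is a homomorphism because $f$ adjacent to $g$ in $P$ implies $j(f)$ adjacent to $j(g)$ in $j(P)$ by elementarity of $j$; and $M$ certifies all of this, since $j \restriction P$, $P$, and $j(P)$ are elements of $M$ with correctly computed edge relations. Hence $M$ satisfies the first-order statement, with parameters $j(I)$, $j(\vec G)$, $j(\kappa)$, asserting that some subset $I_0$ of $j(I)$ of cardinality less than $j(\kappa)$ admits a homomorphism $\prod_{\alpha \in I_0} j(\vec G)(\alpha) \to \prod_{\alpha \in j(I)} j(\vec G)(\alpha)$. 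This statement is the result of applying $j$ to the conclusion of the lemma (with parameters $I$, $\vec G$, $\kappa$), so by elementarity the conclusion holds in $V$.

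The main obstacle, and the only genuinely technical point, is the $V$-versus-$M$ bookkeeping of the previous two paragraphs: one must pick $\lambda$ large enough ($\lambda \ge 2^\kappa$ suffices) that $\vec G$, $P$, and $j \restriction P$ all lie in $M$, and one must confirm that being ``a homomorphism between these particular product graphs'' is absolute between $V$ and $M$, so that the reflected sentence genuinely coincides with the lemma's conclusion. Once that is in place, the large-cardinal content — that $\kappa$ itself serves as the small index set downstairs because $\kappa < j(\kappa)$, and that $j$ pointwise fixes each $G_\alpha$ because $G_\alpha \in V_\kappa$ — is brief.
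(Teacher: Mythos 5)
Your proof is correct, but it takes the elementary-embedding route where the paper works directly with the ultrafilter --- an alternative the paper explicitly flags in a footnote (``the lemma can be proved by working with the ultrafilter or with the corresponding ultrapower \dots\ we will work directly with the ultrafilter''). In the paper's argument one fixes a normal fine $\kappa$-complete ultrafilter $U$ on $\powerset_\kappa\bigl(I \cup \prod_{i\in I} G_i\bigr)$ and shows that for $U$-almost every $A$ the restriction map $\rho_A : A \cap \prod_{i\in I}G_i \to \prod_{i \in A\cap I} G_i$ is a bijection preserving nonadjacency, so that $I_0 = A\cap I$ and $\rho_A^{-1}$ witness the conclusion; this takes three separate combinatorial verifications (injectivity and nonadjacency-preservation via normality plus fineness, surjectivity via a $\kappa$-completeness ``integration'' argument). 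Your version compresses all of that into the single observation that $j \restriction P$ is a homomorphism from $P$ to $j(P)$ as computed in $M$, at the price of the $V$-versus-$M$ bookkeeping you correctly identify as the technical core: $\lambda \ge 2^\kappa$ closure puts $\vec G$, $P$, and $j\restriction P$ in $M$ and makes the relevant products and edge relations absolute, while $\crit(j)=\kappa$ and $G_\alpha \in V_\kappa$ give $j(\vec G)(\alpha) = G_\alpha$ below $\kappa$. The two arguments are essentially dual under the ultrapower correspondence (your witness $I_0 = \kappa \subseteq j(\kappa)$ corresponds to the paper's $A \cap I$ for a $U$-typical $A$), and your choice of $\lambda$ matches the paper's Remark \ref{rem:Woodin} that $2^\kappa$-supercompactness suffices. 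What the paper's route buys is explicitness --- the homomorphism is a concrete inverse of a restriction map and no ultrapower closure facts are needed; what yours buys is a shorter combinatorial core and a cleaner view of exactly where the large-cardinal strength enters (namely, $\kappa < j(\kappa)$ plus enough closure of $M$).
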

\begin{proof}
 It will suffice to show that there is a set $A \subset I \cup \prod_{i \in I} G_i$ of cardinality less than $\kappa$ such that the restriction function 
 \[\rho_A : A \cap \prod_{i \in I} G_i \to \prod_{i \in A \cap I} G_i\]
 defined by $\rho_A(f) = f \restriction (A \cap I)$ is a bijection and preserves nonadjacency. Then for $I_0 = A \cap I$, the inverse of $\rho_A$ will be the desired homomorphism from $\prod_{i \in I_0} G_i$ to  $\prod_{i \in I} G_i$.
 
 In order to show that some set $A \in \powerset_\kappa(I \cup \prod_{i \in I} G_i)$ has this property, we will show the apparently stronger fact that $U$-almost every set $A$ has this property where $U$ is a normal fine $\kappa$-complete ultrafilter on $\powerset_\kappa(I \cup \prod_{i \in I} G_i)$.\footnote{The lemma can be proved by working with the ultrafilter or with the corresponding ultrapower of $\langle V, \in\rangle$.  Because the ultrapower construction is not really necessary here, we will work directly with the ultrafilter.} Such a ultrafilter exists because $\kappa$ is supercompact. When we say that \emph{$U$-almost every} set $A$ has a property, we mean that the set of all sets $A \in \powerset_\kappa(I \cup \prod_{i \in I} G_i)$ with the property is an element of $U$.
 
 First we show that $\rho_A$ is injective for $U$-almost every set $A$. If not, then for $U$-almost every set $A$ we may choose distinct $f_A, g_A \in A \cap \prod_{i \in I} G_i$ whose restrictions are equal: $f_A \restriction (A \cap I) = g_A \restriction (A \cap I)$. We may eliminate the dependence on $A$ by applying normality to the choice functions $A \mapsto f_A$ and $A \mapsto g_A$, obtaining
 distinct
 $f,g \in \prod_{i \in I} G_i$
 such that $f \restriction (A \cap I) = g \restriction (A \cap I)$ for $U$-almost every set $A$. This contradicts fineness of $U$: any index $i \in I$ on which $f$ and $g$ differ must be included in $A \cap I$ for $U$-almost every set $A$.
 
 Next we use a similar argument to show that $\rho_A$ preserves nonadjacency for $U$-almost every set $A$. If not, then for $U$-almost every set $A$ we may choose $f_A, g_A \in A \cap \prod_{i \in I} G_i$ such that $f_A$ is not adjacent to $g_A$ in $\prod_{i \in I} G_i$ but the restriction $f_A \restriction (A \cap I)$ is adjacent to $g_A \restriction (A \cap I)$ in the smaller product $\prod_{i \in A \cap I} G_i$.
 Then by normality we obtain $f,g \in \prod_{i \in I} G_i$ such that $f$ is not adjacent to $g$ in $\prod_{i \in I} G_i$ but the restriction $f \restriction (A \cap I)$ is adjacent to $g \restriction (A \cap I)$ in $\prod_{i \in A \cap I} G_i$ for $U$-almost every set $A$. This contradicts fineness of $U$: any index $i \in I$ on which $f$ is not adjacent to $g$
 (meaning such that $f(i)$ is not adjacent to $g(i)$ in $G_i$)
 must be included in $A \cap I$ for $U$-almost every set $A$.
 
 Finally we use a different type of argument to show that $\rho_A$ is surjective for $U$-almost every set $A$. If not, then for $U$-almost every set $A$ we may choose $f_A \in \prod_{i \in A \cap I} G_i$ that is not in the range of $\rho_A$, meaning that no  element of the larger product $\prod_{i \in I} G_i$ extending $f_A$ is in $A$. In this case we cannot use normality to fix $f_A$ on a set in $U$ because $f_A$ is not an element of $A$. However, we can use the ultrafilter $U$ to ``integrate'' the various choices of $f_A$ to obtain $f \in  \prod_{i \in I} G_i$.
 
 Namely, we can define $f \in \prod_{i \in I} G_i$ by $f(i) = v$ if and only if $f_A(i) = v$ for $U$-almost every set $A$. We must verify two conditions to show that this definition makes sense. First, because $U$ is fine, for $U$-almost every set $A$ we have $i \in A \cap I$, so $f_A(i)$ is defined and is an element of $G_i$. Second, because $U$ is a $\kappa$-complete ultrafilter and $G_i$ has cardinality less than $\kappa$, there is a unique choice of $v \in G_i$ such that $f_A(i) = v$ for $U$-almost every set $A$.
 
 Now by fineness of $U$ we have $f \in A \cap \prod_{i \in I} G_i$ for $U$-almost every set $A$.  By our choice of $f_A$, we must have $f \restriction (A \cap I) \ne f_A$ for $U$-almost every set $A$. For all such $A$ we may choose an index $i_A \in A \cap I$ on which they differ: $f(i_A) \ne f_A(i_A)$. Then by normality of $U$ we may fix $i \in I$ such that $i_A = i$ for $U$-almost every set $A$. For this index $i$ we have $f(i) \ne f_A(i)$ for $U$-almost every set $A$, contradicting the definition of $f(i)$.
\end{proof}

Proposition \ref{prop:V-kappa-satisfies-SWVP} will now follow by a straightforward observation on the relationship between SWVP and homomorphisms of product graphs.

\begin{proof}[Proof of Proposition \ref{prop:V-kappa-satisfies-SWVP}]
 Assume ZFC and let $\kappa$ be a supercompact cardinal. Then $\kappa$ is inaccessible, so $\langle V_{\kappa+1}, \in\rangle$ satisfies GB + AC. We want to show that it also satisfies Semi-Weak Vop\v{e}nka's Principle. Let $\langle G_\alpha : \alpha < \kappa\rangle$ be a sequence of graphs in $V_{\kappa+1}$ (so each $G_\alpha$ is in $V_\kappa$) such that for all ordinals $\alpha \le \alpha' < \kappa$ there is a homomorphism from $G_{\alpha'}$ to $G_\alpha$. We want to show that for some $\alpha < \alpha' < \kappa$ there is a homomorphism from $G_\alpha$ to $G_{\alpha'}$.  In particular, we will show that for some $\beta < \kappa$ there is a homomorphism from $G_\beta$ to $G_{\beta+1}$.
 
 By Lemma \ref{lem:product-hom} there is a homomorphism $\prod_{\alpha \in I_0} G_\alpha \to \prod_{\alpha <\kappa} G_\alpha$ for some set $I_0 \subset \kappa$ of cardinality less than $\kappa$. Let $\beta$ be the supremum of $I_0$, which is less than $\kappa$ because $\kappa$ is a regular cardinal. Then by our hypothesis on the existence of ``backward'' homomorphisms we may choose homomorphisms $G_\beta \to G_\alpha$ for all $\alpha \in I_0$ and combine them to obtain a homomorphism $G_\beta \to \prod_{\alpha \in I_0} G_\alpha$. Moreover, we have a projection homomorphism $\prod_{\alpha <\kappa} G_\alpha \to G_{\beta+1}$. We may compose these three homomorphisms
 \[ G_\beta \to \prod_{\alpha \in I_0} G_\alpha \to \prod_{\alpha <\kappa} G_\alpha \to G_{\beta+1}\]
 to obtain the desired homomorphism from $G_\beta$ to $G_{\beta+1}$.
\end{proof}

Theorems \ref{thm:V-kappa-satisfies-SWVP-and-not-exists-spct} and \ref{thm:V-kappa-satisfies-SWVP-and-not-exists-SC} will now follow easily.

\begin{proof}[Proof of Theorem \ref{thm:V-kappa-satisfies-SWVP-and-not-exists-spct}]
 Let $\kappa$ be the first supercompact cardinal. Then Semi-Weak Vop\v{e}nka's Principle holds in $\langle V_{\kappa+1}, \in \rangle$ by Proposition \ref{prop:V-kappa-satisfies-SWVP}. Suppose toward a contradiction that the statement ``there is a supercompact cardinal'' holds in $\langle V_{\kappa+1}, \in \rangle$. This statement is first-order, so we might as well just say that it holds in the structure $\langle V_{\kappa}, \in \rangle$. Let $\kappa_0 < \kappa$ be a witness to this statement, meaning that $\kappa_0$ is $X$-supercompact for every set $X \in V_\kappa$. Because $\kappa$ itself is supercompact, this implies that $\kappa_0$ is $X$-supercompact for every set $X \in V$, \emph{i.e.}\ it is supercompact: see Kanamori \cite[Exercise 22.9]{KanHigherInfinite}.  This contradicts the minimality of $\kappa$.
\end{proof}

\begin{proof}[Proof of Theorem \ref{thm:V-kappa-satisfies-SWVP-and-not-exists-SC}]
 The same as the proof of Theorem \ref{thm:V-kappa-satisfies-SWVP-and-not-exists-spct}, using the fact that if $\kappa_0$ is $X$-compact for every set $X \in V_\kappa$ where $\kappa$ is supercompact, then $\kappa_0$ is strongly compact.
\end{proof}

\begin{rem}\label{rem:Woodin}
 The proof of Lemma \ref{lem:product-hom} used only $2^\kappa$-supercompactness of $\kappa$, meaning $X$-supercompactness for a set $X$ of cardinality at most $2^\kappa$. This is the smallest fragment of supercompactness that suffices for Lemma \ref{lem:product-hom} and Proposition \ref{prop:V-kappa-satisfies-SWVP}, but it is not the weakest large cardinal hypothesis that suffices. As we will show in another article, the weakest large cardinal hypothesis that suffices for these results is ``$\kappa$ is a Woodin cardinal.''
\end{rem}

\section{Weak Vop\v{e}nka's Princple implies Semi-Weak Vop\v{e}nka's Princple}\label{sec:wvp-implies-swvp}

In this section we will prove Theorem \ref{thm:wvp-implies-swvp}, which says that WVP implies SWVP. (The converse is obvious.) We work in G\"odel--Bernays set theory with the Axiom of Choice for sets. Assume that Semi-Weak Vop\v{e}nka's Principle fails and let $\langle G_\alpha : \alpha \in \Ord\rangle$ be a counterexample, meaning that for every pair of ordinals $\alpha \le \alpha'$ there is a homomorphism from $G_{\alpha'}$ to $G_{\alpha}$ and for every pair of ordinals $\alpha < \alpha'$ there is no homomorphism from $G_\alpha$ to $G_{\alpha'}$. For every ordinal $\beta$ we define the product graph
\[H_\beta = \prod_{\alpha < \beta} G_{\alpha+1}.\]

Note that the sequence $\langle H_\beta : \beta\in \Ord\rangle$ is also a counterexample to SWVP: for every pair of ordinals $\beta \le \beta'$ there is a restriction homomorphism from $H_{\beta'}$ to $H_{\beta}$ given by $f \mapsto f \restriction \beta$, and for every pair of ordinals $\beta < \beta'$ there is no homomorphism from $H_\beta$ to $H_{\beta'}$ because if there were, then as in the proof of Proposition \ref{prop:V-kappa-satisfies-SWVP} we could compose homomorphisms
\[ G_\beta \to H_\beta \to H_{\beta'} \to G_{\beta+1}\]
to obtain a homomorphism from $G_\beta$ to $G_{\beta+1}$, which is a contradiction.

The sequence of product graphs $\langle H_\beta : \beta\in \Ord\rangle$ has the advantage that its ``backward'' homomorphisms are given simply by restriction.  We will use this sequence to build a counterexample to Weak Vop\v{e}nka's Principle.

Let $\Lambda$ be the class of all limit ordinals $\lambda$ such that $G_{\alpha+1} \in V_{\lambda}$ for all $\alpha < \lambda$, which is a proper class because it is the class of closure points of the function $\alpha \mapsto \rank(G_{\alpha+1})$. For every pair of ordinals $\lambda \le \lambda'$ in $\Lambda$ we define the function $h_{\lambda'\lambda} : V_{\lambda'+1} \to V_{\lambda+1}$ by
\[h_{\lambda'\lambda} (x) = x \cap V_{\lambda}.\]

We will define some structure (constants and relations) on the sets $V_{\lambda+1}$ ($\lambda \in \Lambda$) that is preserved by these functions $h_{\lambda'\lambda}$ and not by any other functions. To ensure that the structure is not preserved by any other functions, we will need to code our counterexample to SWVP into the structures in some way, because we cannot hope to define a counterexample to WVP from scratch. The assumption $\lambda \in \Lambda$ will ensure that we have enough room to code the sequence of product graphs $\langle H_\beta : \beta \le \lambda\rangle$ into our structure on $V_{\lambda+1}$ in a nice way.

Let $\mathcal{L}$ be the language with a constant symbol $c$ and ternary relation symbols $R$, $S$, and $T$. For every ordinal $\lambda \in \Lambda$ we define a corresponding $\mathcal{L}$-structure
\[\mathcal{M}_\lambda = \langle V_{\lambda+1}, c^{\mathcal{M}_\lambda}, R^{\mathcal{M}_\lambda}, S^{\mathcal{M}_\lambda}, T^{\mathcal{M}_\lambda}\rangle\]
where $c^{\mathcal{M}_\lambda} = \lambda$ and the interpretations of $R$, $S$, and $T$ are defined as follows.
\begin{align*}
 R^{\mathcal{M}_\lambda}(\beta, x,y) &\iff \big(\beta = \rank(x) \text{ and } x \in y\big) \text{ or } \beta = \lambda\\
 S^{\mathcal{M}_\lambda}(\beta, x,y) &\iff \big(\beta = \rank(x) \text{ and } x \notin y\big) \text{ or } \beta = \lambda\\
 T^{\mathcal{M}_\lambda}(\beta, x,y) &\iff \text{$x$ is adjacent to $y$ in $H_\beta$.}
\end{align*}

Here we take $\beta$ to be a von Neumann ordinal, so $\beta \in V_{\lambda+1}$ means $\beta \le \lambda$. In the definition of $T^{\mathcal{M}_\lambda}$ we take $x$ and $y$ to be vertices of $H_\beta$. Because $\lambda \in \Lambda$, a straightforward calculation using the definition of $\Lambda$ and the set-theoretic definition of products shows that for every ordinal $\beta \le \lambda$, every vertex of $H_\beta$ has rank at most $\lambda$ and is therefore an element of $V_{\lambda+1}$.

First we show that the functions $h_{\lambda'\lambda}$ are indeed homomorphisms of these structures.

\begin{claim}
 For every pair of ordinals $\lambda \le \lambda'$ in $\Lambda$, $h_{\lambda'\lambda}$ is a homomorphism $\mathcal{M}_{\lambda'} \to \mathcal{M}_\lambda$.
\end{claim}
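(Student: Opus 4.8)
The plan is to check directly that $h_{\lambda'\lambda}$ preserves the constant $c$ and each of the relations $R$, $S$, $T$, using two elementary facts about the map $x \mapsto x \cap V_\lambda$. First, for any ordinal $\beta \le \lambda'$ one has $h_{\lambda'\lambda}(\beta) = \beta \cap V_\lambda = \min(\beta,\lambda)$; in particular $h_{\lambda'\lambda}(\lambda') = \lambda = c^{\mathcal{M}_\lambda}$, so the constant is preserved, and $h_{\lambda'\lambda}(\beta) = \beta$ whenever $\beta \le \lambda$. Second, if $x \in V_{\lambda'+1}$ has rank at most $\lambda$ then $x \subseteq V_\lambda$, so $h_{\lambda'\lambda}(x) = x$; in particular $h_{\lambda'\lambda}$ fixes every vertex of $H_\beta$ whenever $\beta \le \lambda$, by the rank computation already recorded in the text.

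Next I would handle $R$, and then $S$ by the same method. Suppose $R^{\mathcal{M}_{\lambda'}}(\beta,x,y)$ holds. If it holds because $\beta = \lambda'$, then $h_{\lambda'\lambda}(\beta) = \lambda$ and the image triple satisfies the disjunct ``$\beta = \lambda$'' in $\mathcal{M}_\lambda$. Otherwise $\beta = \rank(x)$ and $x \in y$, and since $x \in y \in V_{\lambda'+1}$ this forces $\beta < \lambda'$. If $\beta \ge \lambda$, then $h_{\lambda'\lambda}(\beta) = \lambda$ and again the ``$\beta = \lambda$'' disjunct applies in $\mathcal{M}_\lambda$. If $\beta < \lambda$, then by the two facts above $h_{\lambda'\lambda}(x) = x$ and $\rank(h_{\lambda'\lambda}(x)) = \beta = h_{\lambda'\lambda}(\beta)$, while $x \in V_\lambda$ gives $x \in y \cap V_\lambda = h_{\lambda'\lambda}(y)$, so the first disjunct of $R^{\mathcal{M}_\lambda}$ holds. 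For $S$ the argument is the same, the only difference being that $x \notin y$ no longer bounds $\rank(x)$ strictly below $\lambda'$; this is harmless, since whenever $\rank(x) \ge \lambda$ the ``$\beta = \lambda$'' disjunct applies, and whenever $\rank(x) < \lambda$ we get $h_{\lambda'\lambda}(x) = x \notin y \cap V_\lambda = h_{\lambda'\lambda}(y)$.

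The relation $T$ is the step that needs genuine care, and I expect the rank bookkeeping here to be the main obstacle. Suppose $x$ is adjacent to $y$ in $H_\beta$, where $\beta \le \lambda'$ and $x,y$ are vertices of $H_\beta$. If $\beta \le \lambda$ then $h_{\lambda'\lambda}$ fixes $x$ and $y$ (second fact) and $h_{\lambda'\lambda}(\beta) = \beta$, so adjacency in $H_\beta = H_{h_{\lambda'\lambda}(\beta)}$ is immediate. If $\lambda < \beta \le \lambda'$ then $h_{\lambda'\lambda}(\beta) = \lambda$, and the key point is that on a vertex $x$ of $H_\beta$ the map $h_{\lambda'\lambda}$ agrees with the restriction $x \mapsto x \restriction \lambda$: using $\lambda \in \Lambda$, for each $\alpha < \lambda$ the pair $\langle\alpha,x(\alpha)\rangle$ has rank below the limit ordinal $\lambda$, because $\alpha < \lambda$ and $\rank(x(\alpha)) < \rank(G_{\alpha+1}) < \lambda$, whereas for $\alpha \ge \lambda$ the pair $\langle\alpha,x(\alpha)\rangle$ has rank exceeding $\lambda$; hence $x \cap V_\lambda = x \restriction \lambda$, which is a vertex of $H_\lambda$. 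Since adjacency in $H_\beta$ is coordinatewise, restricting to the coordinates below $\lambda$ yields adjacency of $x \restriction \lambda$ and $y \restriction \lambda$ in $H_\lambda = H_{h_{\lambda'\lambda}(\beta)}$, which is what we need. Everything else is routine; the ``$\beta = \lambda$'' disjuncts built into $R$ and $S$ are there precisely so that no verification is required once $\beta$ is large, which is what keeps the first two relations painless.
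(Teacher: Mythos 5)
Your proof is correct and follows essentially the same route as the paper: check preservation of $c$ via $h_{\lambda'\lambda}(\beta)=\min(\beta,\lambda)$, handle $R$ and $S$ by splitting on whether $\beta<\lambda$ (where $h_{\lambda'\lambda}$ fixes $x$ and $\in$/$\notin$ with $h_{\lambda'\lambda}(y)=y\cap V_\lambda$ behave as needed) or $\beta\ge\lambda$ (where the ``$\beta=\lambda$'' disjunct absorbs everything), and handle $T$ by showing $x\cap V_\lambda = x\restriction\lambda$ on vertices of $H_\beta$ and using that restriction is a graph homomorphism. The only difference is that you spell out the rank computation for ordered pairs that the paper dismisses as ``a straightforward consequence,'' which is a matter of detail rather than of approach.
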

\begin{proof}
 Let us abbreviate $h_{\lambda'\lambda}$ as $h$. Note that for every ordinal $\beta \le \lambda'$ we have $h(\beta) = \beta \cap V_\lambda = \min(\beta, \lambda)$. In particular we have $h(\lambda') = \lambda$, meaning that $h$ preserves $c$.
 
 To show preservation of $R$, let $\langle \beta, x,y \rangle \in R^{\mathcal{M}_{\lambda'}}$. First we consider the case where $\beta < \lambda$. Then because $\beta \ne \lambda'$ the triple $\langle \beta, x,y \rangle \in R^{\mathcal{M}_{\lambda'}}$ must satisfy the first clause of the definition of $R^{\mathcal{M}_{\lambda'}}$, meaning that $\beta = \rank(x)$ and $x \in y$. Because $\beta < \lambda$ we have $h(\beta) = \beta$, and because $\rank(x) < \lambda$ we have $h(x) = x$. Because $\rank(x) < \lambda$ and $x \in y$, we have $x \in h(y)$. Therefore we can write $h(\beta) = \rank(h(x))$ and $h(x) \in h(y)$, so the triple $\langle h(\beta), h(x),h(y) \rangle$ satisfies the first clause of the definition of $R^{\mathcal{M}_{\lambda}}$.
 
 Next we consider the case where $\beta \ge \lambda$. Then $h(\beta) = \lambda$, so $\langle h(\beta), h(x),h(y) \rangle$ satisfies the second clause of the definition of $R^{\mathcal{M}_{\lambda}}$. In both cases $\langle h(\beta), h(x),h(y) \rangle$ is in $R^{\mathcal{M}_{\lambda}}$.
  
 Preservation of $S$ is proved by a similar argument. The only difference is that instead of $x \in y \implies x \in h(y)$ we use $x \notin y \implies x \notin h(y)$.
  
 To establish preservation of $T$, we simply note that for all $\beta \le \lambda'$ and all $x$ in the product graph $H_\beta$, if $\beta \le \lambda$ then $h(x) = x \cap V_\lambda = x$ and if $\beta > \lambda$ then $h(x) = x \cap V_\lambda = x \restriction \lambda$. This is a straightforward consequence of the fact that $\lambda \in \Lambda$, the definition of $\Lambda$, and the set-theoretic definition of products. Then in the case $\beta > \lambda$ we use the fact that the restriction map $x \mapsto x \restriction \lambda$ is a homomorphism from $H_\beta$ to $H_\lambda$.
\end{proof}

Next we show that there are no ``forward'' homomorphisms among these structures.

\begin{claim}\label{claim:no-forward-homs}
 For every pair of ordinals $\lambda < \lambda'$ in $\Lambda$, there is no homomorphism $\mathcal{M}_{\lambda}\to\mathcal{M}_{\lambda'}$.
\end{claim}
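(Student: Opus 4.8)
The plan is to derive a contradiction from the existence of a homomorphism $g : \mathcal{M}_\lambda \to \mathcal{M}_{\lambda'}$ by extracting from $g$ a forbidden "forward" homomorphism between the product graphs $H_\lambda$ and $H_{\lambda'}$ (or equivalently, between the $G_\alpha$'s). The key observation is that $g$ must preserve $c$, so $g(\lambda) = \lambda'$. Preservation of $R$ and $S$ should then pin down the behavior of $g$ on the entire "coding skeleton" $V_\lambda \cup \{\lambda\} \subseteq V_{\lambda+1}$: since for ordinals $\beta < \lambda$ and sets $x$ of rank $\beta$ the relations $R^{\mathcal{M}_\lambda}, S^{\mathcal{M}_\lambda}$ record exactly the membership relation tagged by rank, a homomorphism preserving both $R$ and $S$ must send each set $x \in V_\lambda$ of rank $\beta$ to a set $g(x)$ of rank $g(\beta)$ with the same $\in$-relations to the images of elements of $y$ whenever $x \in y$ or $x \notin y$. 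First I would run an induction on rank to show that $g \restriction V_\lambda$ is the identity (or at least fixes every ordinal below $\lambda$ and every vertex of every $H_\beta$, $\beta < \lambda$): the base case $g(\emptyset) = \emptyset$ follows from $g(\emptyset)$ having rank $g(0) = 0$, and the inductive step uses that $g$ preserves both "$x \in y$" and "$x \notin y$" (via $R$ and $S$) to force $g(y) = \{g(x) : x \in y\} = y$. The presence of the "or $\beta = \lambda$" escape clause in the definitions of $R$ and $S$ does no harm here because for $x$ of rank strictly below $\lambda$ we have $\rank(x) \ne \lambda$, so only the first clause is available as a witness in $\mathcal{M}_\lambda$, and we only need the forward direction of preservation.

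Having shown $g$ fixes $V_\lambda$ pointwise, I would next use preservation of $T$ to get the contradiction. The relation $T^{\mathcal{M}_\lambda}(\lambda, x, y)$ holds exactly when $x$ is adjacent to $y$ in $H_\lambda$, and vertices of $H_\lambda$ are elements of $V_{\lambda+1}$ but not necessarily of $V_\lambda$ — so $g$ need not fix them outright. However, since $g(\lambda) = \lambda'$, preservation of $T$ tells us that whenever $x, y$ are adjacent vertices of $H_\lambda$, the images $g(x), g(y)$ are adjacent vertices of $H_{\lambda'}$. The map $x \mapsto g(x)$ restricted to vertices of $H_\lambda$ is therefore a homomorphism from $H_\lambda$ to $H_{\lambda'}$. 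Since $\lambda < \lambda'$, this contradicts the earlier established fact that the sequence $\langle H_\beta : \beta \in \Ord \rangle$ is a counterexample to SWVP — specifically, that there is no homomorphism $H_\lambda \to H_{\lambda'}$ when $\lambda < \lambda'$.

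There is one subtlety to handle: the argument above requires that $g$ actually maps vertices of $H_\lambda$ into vertices of $H_{\lambda'}$, i.e., that $T^{\mathcal{M}_{\lambda'}}(\lambda', g(x), g(y))$ genuinely uses the "$x$ adjacent to $y$ in $H_{\lambda'}$" clause — but in the definition of $T$ there is no escape clause, so this is automatic. A more delicate point is whether we even need $g$ to fix $V_\lambda$ pointwise, or merely to fix ordinals $< \lambda$; I expect fixing ordinals suffices for the $T$-argument if we additionally know $g$ maps $H_\lambda$-vertices to $H_{\lambda'}$-vertices, which follows from $g(\lambda) = \lambda'$ alone. The main obstacle will be the rank-induction showing $g \restriction V_\lambda = \id$: one must be careful that preservation of $S$ (nonmembership) is strong enough to prevent $g(y)$ from acquiring spurious elements, and that the rank tag in $R$ and $S$ — together with $g$ preserving ranks inductively — prevents $g$ from collapsing or permuting things. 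I would organize this as a single induction on $\rank(x)$ establishing simultaneously that $g(\rank(x)) = \rank(x)$ and $g(x) = x$ for all $x \in V_\lambda$, using the $R$/$S$ clauses at rank $\rank(x) < \lambda$, and then finish with the one-line $T$-argument above.
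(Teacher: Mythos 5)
Your closing ``one-line $T$-argument'' is exactly the paper's proof, and it is all that is needed: preservation of $c$ gives $g(\lambda)=\lambda'$, and then preservation of $T$ applied to triples with first coordinate $\lambda$ shows that $g$ restricted to the vertices of $H_\lambda$ is a graph homomorphism $H_\lambda \to H_{\lambda'}$, contradicting the fact that $\langle H_\beta : \beta \in \Ord\rangle$ is a counterexample to SWVP. So the proposal is correct, but only because you ultimately recognize that the long first paragraph can be discarded.

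Two comments on that discarded material. First, the rank induction showing $g \restriction V_\lambda = \id$ is not needed for this claim at all; it is essentially the content of the \emph{next} claim (uniqueness of the backward homomorphisms $h_{\lambda'\lambda}$), so you were proving the wrong lemma before catching yourself. Second, as sketched the induction has a genuine gap: you address the escape clause only on the source side (``only the first clause is available as a witness in $\mathcal{M}_\lambda$''), but the problem is on the target side. From $R^{\mathcal{M}_{\lambda'}}(g(\beta), g(x), g(y))$ you cannot conclude $g(\beta)=\rank(g(x))$ and $g(x)\in g(y)$, because the triple might instead satisfy the clause $g(\beta)=\lambda'$. To rule that out you must first show $g$ does not send any ordinal $\beta$ to a strictly larger ordinal, which itself requires the $T$-argument (a homomorphism $H_\beta \to H_{g(\beta)}$ would contradict the SWVP counterexample). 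This is precisely the order of steps the paper uses in the subsequent claim; here, none of it is necessary.
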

\begin{proof}
 Assume toward a contradiction that there is such a homomorphism $h$.  Preservation of $c$ implies $h(\lambda) = \lambda'$, and in conjunction with preservation of $T$ this implies that $h \restriction H_\lambda$ is a homomorphism from $H_\lambda$ to $H_{\lambda'}$, contradicting the fact that the sequence of product graphs $\langle H_\beta : \beta \in \Ord\rangle$ is a counterexample to SWVP.
\end{proof}

Finally, we show that the homomorphisms $h_{\lambda'\lambda}$ are the only ``backward'' homomorphisms.

\begin{claim}
 For every pair of ordinals $\lambda \le \lambda'$ in $\Lambda$, every homomorphism from $\mathcal{M}_{\lambda'}$ to $\mathcal{M}_\lambda$ is equal to $h_{\lambda'\lambda}$.
\end{claim}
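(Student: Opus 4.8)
The plan is to show that any homomorphism $h\colon\mathcal{M}_{\lambda'}\to\mathcal{M}_\lambda$ must agree with $h_{\lambda'\lambda}$ pointwise, by reconstructing the set-membership relation on $V_\lambda$ from the structure and arguing by $\in$-induction on rank. The key tool is that the relations $R$ and $S$, together with the constant $c=\lambda$, let a structure "see" the true rank function and membership relation below $\lambda$: for ordinals $\beta<\lambda$, the triple $\langle\beta,x,y\rangle$ lies in $R^{\mathcal{M}_\lambda}$ exactly when $\operatorname{rank}(x)=\beta$ and $x\in y$, and similarly $S^{\mathcal{M}_\lambda}$ captures $\operatorname{rank}(x)=\beta$ and $x\notin y$; the "$\beta=\lambda$" escape clause only kicks in at the very top and can be detected using $c$.

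First I would establish that $h$ fixes every ordinal $\beta<\lambda$ and sends $\lambda$ to $\lambda$ (the latter is just preservation of $c$). Since $h$ preserves $R$ and $S$, and since for $\beta<\lambda$ the ordinal $\beta$ is the unique element $x$ for which there exist $y,z$ with $\langle\gamma,x,y\rangle\in R$, $\langle\gamma,x,z\rangle\in S$ appropriately witnessing that $x=\beta$ has rank $\beta$ and that $\beta\in\beta+1$, $\beta\notin\beta$ — in short, the ordinals below $\lambda$ are definable by a formula using $c$, and the structure $\mathcal{M}_\lambda$ has the genuine rank function encoded — one shows $h(\beta)=\beta$ for all $\beta<\lambda$ by induction on $\beta$. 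Next I would prove by $\in$-induction on $\operatorname{rank}(x)<\lambda$ that $h(x)=x$ for every $x\in V_\lambda$: given that $h$ fixes all sets of rank $<\operatorname{rank}(x)$ (hence all elements of $x$), preservation of $R$ forces every element of $x$ to be an element of $h(x)$, and preservation of $S$ forces every set of rank $<\operatorname{rank}(x)$ that is \emph{not} in $x$ to be not in $h(x)$; since $h$ also preserves the rank-$\beta$ labels, $h(x)$ has the same elements of each rank $<\operatorname{rank}(x)$ as $x$, and the rank constraint rules out $h(x)$ having any elements of rank $\ge\operatorname{rank}(x)$, so $h(x)=x$. Finally, for an arbitrary $x\in V_{\lambda'+1}$, i.e.\ $x\subseteq V_{\lambda'}$, I would show $h(x)=x\cap V_\lambda$: preservation of $R$ gives $x\cap V_\lambda\subseteq h(x)$ (every element of $x$ of rank $<\lambda$ is fixed by the previous step and forced into $h(x)$), while preservation of $S$ gives the reverse inclusion $h(x)\subseteq V_\lambda$ interpreted correctly, namely that no set of rank $<\lambda$ lying outside $x$ can be in $h(x)$, and $h(x)\in V_{\lambda+1}$ already forces $h(x)\subseteq V_\lambda$. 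Combining, $h(x)=x\cap V_\lambda=h_{\lambda'\lambda}(x)$.

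The main obstacle I anticipate is handling the "$\beta=\lambda$" clauses in the definitions of $R$ and $S$ cleanly: these clauses make $R^{\mathcal{M}_\lambda}$ and $S^{\mathcal{M}_\lambda}$ contain many "junk" triples with first coordinate $\lambda$, so one cannot simply read off membership from $R$ without first isolating when the first coordinate is $<\lambda$ versus $=\lambda$. This is exactly what the constant $c$ is for, and the point is that $h$ preserves $c$, so $h$ knows where $\lambda'$ (the top) is in the domain and where $\lambda$ is in the codomain; the careful bookkeeping is to check that the junk clauses never interfere with the inductive reconstruction because at every stage the relevant ranks are strictly below $\lambda$ (respectively $\lambda'$). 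A secondary subtlety is that $x\in V_{\lambda'+1}$ can have rank up to $\lambda'$, so the final step genuinely needs the separate argument via $h(\lambda)=\lambda$ rather than falling under the $\in$-induction, but this is routine once the rank-labeling via $R,S$ is set up.
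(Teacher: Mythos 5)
There is a genuine gap, and it sits exactly at the point you flag as ``the main obstacle.'' Your plan for showing $h(\beta)=\beta$ for ordinals $\beta<\lambda$ (and, more basically, for landing in the first clause of $R^{\mathcal{M}_\lambda}$ rather than the escape clause) uses only $R$, $S$, and the constant $c$. But preservation of $c$ gives only $h(\lambda')=\lambda$; it does not prevent $h$ from sending some ordinal $\beta<\lambda'$ to $\lambda$ as well, and homomorphisms do not preserve definable predicates, so ``the ordinals below $\lambda$ are definable using $c$'' buys you nothing. Concretely, the constant map with value $\lambda$ preserves $c$, $R$, and $S$ (every image triple $\langle\lambda,\lambda,\lambda\rangle$ is absorbed by the clause ``$\beta=\lambda$''), so no argument from these three symbols alone can pin $h$ down. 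Your induction therefore stalls at its first step: from $\langle\beta,\beta,\beta+1\rangle\in R^{\mathcal{M}_{\lambda'}}$ you may conclude only ``$h(\beta)=\rank(h(\beta))$ and $h(\beta)\in h(\beta+1)$, \emph{or} $h(\beta)=\lambda$,'' and nothing you have introduced rules out the second disjunct. (A smaller slip: you assert $h(\lambda)=\lambda$ ``by preservation of $c$,'' but $c^{\mathcal{M}_{\lambda'}}=\lambda'$, so $c$ says $h(\lambda')=\lambda$, not $h(\lambda)=\lambda$.)

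The missing ingredient is the relation $T$, i.e., the coded SWVP counterexample $\langle H_\beta : \beta\in\Ord\rangle$, which your proposal never uses in this claim. The paper's proof begins by observing that $h$ cannot map any ordinal to a strictly larger ordinal: if $h(\beta)=\beta'$ with $\beta<\beta'$, then preservation of $T$ makes $h\restriction H_\beta$ a graph homomorphism from $H_\beta$ to $H_{\beta'}$, contradicting the choice of the $H_\beta$ (the same argument as in Claim \ref{claim:no-forward-homs}). Only after this does the $R$/$S$ analysis go through: since $\beta=\rank(x)<\lambda$ cannot be sent to $\lambda$, the escape clause is unavailable, one gets $\rank(h(x))\le\rank(x)$ and $h(x)\in h(y)\iff x\in y$, whence $h\restriction V_\lambda=\id$ by a minimal-rank argument (essentially your $\in$-induction, which is fine once the ordinal step is repaired) and finally $h(y)=y\cap V_\lambda$. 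The omission is not cosmetic: the structures are designed so that $R$, $S$, $c$ alone, being definable from $\lambda$, could never yield a counterexample to WVP; all of the rigidity has to come from $T$.
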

\begin{proof}
 Let $h$ be a homomorphism from $\mathcal{M}_{\lambda'}$ to $\mathcal{M}_\lambda$. We want to show that $h(y) = y \cap V_\lambda$ for all $y \in V_{\lambda'+1}$.  First, we note that $h$ cannot map any ordinal to a larger ordinal: if $h(\beta) = \beta'$ where $\beta < \beta'$, then preservation of $T$ implies that $h \restriction H_\beta$ is a homomorphism from $H_\beta$ to $H_{\beta'}$ and we get a contradiction as in the proof of Claim \ref{claim:no-forward-homs}.
 
 Now let $x \in V_\lambda$ and $y \in V_{\lambda'+1}$.
 If $x \in y$, then applying preservation of $R$ to the triple $\langle \beta, x, y\rangle \in R^{\mathcal{M}_{\lambda'}}$ where $\beta = \rank(x)$ shows that $h(\beta) = \rank(h(x))$ and $h(x) \in h(y)$. This is because $h$ cannot map $\beta$ to the larger ordinal $\lambda$, so we must end up in the first clause of the definition of $R^{\mathcal{M}_\lambda}$ rather than the second clause. Moreover, $h$ cannot increase the rank of $x$ because $h(\beta) \le \beta$.
 If $x \notin y$, a similar argument using preservation of $S$ instead of $R$ shows that $h(x) \notin h(y)$ and again $h$ cannot increase the rank of $x$. Therefore we have:
 \begin{enumerate}
  \item\label{item:rank-non-increasing} $\rank(h(x)) \le \rank(x)$ for all $x \in V_\lambda$, and 
  \item\label{item:preserves-in-and-notin} $h(x) \in h(y) \iff x \in y$, for all $x \in V_\lambda$ and $y \in V_{\lambda'+1}$.
 \end{enumerate}
 
 When applied to the case where $y$ is also an element of $V_{\lambda}$, statements \eqref{item:rank-non-increasing} and \eqref{item:preserves-in-and-notin} show that $h \restriction V_\lambda$ is a function from $V_\lambda$ to $V_\lambda$ that preserves $\in$ and $\notin$ and does not increase rank.  It is well-known that any such function must be the identity,\footnote{If not, let $x \in V_\lambda$ be a set of minimal rank that is moved by $h$. Then we obtain a contradiction using the fact that both $x$ and $h(x)$ are determined by which sets of lesser rank they do and do not contain.}
 so statement \eqref{item:preserves-in-and-notin} becomes
 \begin{quote}
  $x \in h(y) \iff x \in y$, for all $x \in V_\lambda$ and $y \in V_{\lambda'+1}$.
 \end{quote}
 
 Now let $y \in V_{\lambda'+1}$.
 Because $h(y) \in V_{\lambda+1}$, meaning $h(y) \subset V_\lambda$, the above statement implies that $h(y) = y \cap V_{\lambda}$, which agrees with $h_{\lambda'\lambda}$.
\end{proof}

We showed that the only homomorphisms among the structures $\mathcal{M}_\lambda$ for $\lambda \in \Lambda$ are the homomorphisms $h_{\lambda'\lambda}$ for $\lambda \le \lambda'$. We may enumerate our index class $\Lambda$ in increasing order as $\langle \lambda_\alpha : \alpha \in \Ord\rangle$ to obtain an Ord-sequence of structures $\langle \mathcal{M}_{\lambda_\alpha} : \alpha \in \Ord\rangle$. This is a counterexample to WVP except that the structures are $\mathcal{L}$-structures instead of graphs. However, the category of $\mathcal{L}$-structures fully embeds into the category of graphs by Hedrl\'{\i}n and Pultr \cite{HedPulFullEmbeddings}, so we can replace each structure $\mathcal{M}_{\lambda_\alpha}$ by its image under such an embedding to obtain an $\Ord$-sequence of graphs that is a counterexample to WVP.

\bibliographystyle{plain}
\bibliography{WVP-vs-VP}

\end{document}